\documentclass[11pt]{amsart}

\usepackage{amssymb,amsfonts,mathrsfs,enumitem,amsmath,amsthm,bbold}
\usepackage[mathscr]{euscript}
\usepackage{color}

\usepackage{tikz,tikz-cd}
\usetikzlibrary{angles,quotes,calc,arrows.meta,decorations.markings,ext.arrows}

\usepackage[backend=bibtex,
    sorting=anyt,
    isbn=false,
    url=false,
    doi=false,
    maxbibnames=100
    ]{biblatex}
\addbibresource{biblio.bib}

\definecolor{Chocolat}{rgb}{0.36, 0.2, 0.09}
\definecolor{BleuTresFonce}{rgb}{0.215, 0.215, 0.36}
\definecolor{EgyptianBlue}{rgb}{0.06, 0.2, 0.65}

\setcounter{tocdepth}{1}

\usepackage[colorlinks,final,hyperindex]{hyperref}
\hypersetup{citecolor=BleuTresFonce, urlcolor=EgyptianBlue, linkcolor=Chocolat}

\usepackage{fourier}

\newtheorem{theorem}{Theorem}[section]

\newtheorem{proposition}[theorem]{Proposition}

\theoremstyle{definition}

\DeclareMathOperator{\Lie}{Lie}
\DeclareMathOperator{\Aut}{Aut}

\DeclareMathOperator{\Mod}{Mod}

\DeclareMathOperator{\End}{End}

\DeclareMathOperator{\tr}{tr}

\DeclareMathAlphabet{\mathbbold}{U}{bbold}{m}{n}

\def\k{\mathbbold{k}}

\begin{document}

\title[Algebraic versions of $\mathbb{T}^2$ and of $\mathbb{P}^1\times\mathbb{P}^1$ and Hochschild cohomology]{Algebraic versions of $\mathbb{T}^2$ and of $\mathbb{P}^1\times\mathbb{P}^1$\\  and Hochschild cohomology}

\author{Vladimir Dotsenko}

\address{Institut de Recherche Math\'ematique Avanc\'ee, UMR 7501, Universit\'e de Strasbourg et CNRS, 7 rue Ren\'e-Descartes, 67000 Strasbourg CEDEX, France}

\email{vdotsenko@unistra.fr}

\author{Andrea Solotar}

\address{IMAS-CONICET and Departamento de Matemática, Facultad de Ciencias Exactas y Naturales, Universidad de Buenos Aires, Pabellon I, Ciudad Universitaria, Buenos Aires, 1428, Argentina}

\address{Guangdong Technion Israel Institute of Technology, Shantou, Guangdong Province, China}

\email{asolotar@dm.uba.ar}

\date{}

\dedicatory{To the memory of Roberto Martínez-Villa}

\begin{abstract}
We examine the Hochschild cohomology for triangular algebras that capture some aspects of geometry and topology of the torus and of the quadric surface, and for deformations of these algebras. In particular, this shows that the cup product on the Hochschild cohomology of a triangular algebra does not generally follow the intuition coming from monomial algebras. Our examples also demonstrate that the Hochschild cohomology of a deformation of an algebra may not experience the dimension drop but still have a different cup product structure, and that the Hochschild cohomologies of deformations of two derived equivalent algebras may exhibit noticeably different behaviours. 
\end{abstract}

\maketitle

\section{Introduction}

For an associative algebra $\Lambda$, one of its most fundamental invariants is the Hochschild cohomology $HH^\bullet(\Lambda,\Lambda)$ \cite{MR11076}, equipped with its Gerstenhaber algebra structure \cite{MR161898}; in fact, this data is, in a suitable sense, invariant under derived equivalences \cite{MR2043327,MR1099084}. It occasionally happens that the derived category of $\Lambda$-modules and the derived category of coherent sheaves on a smooth scheme~$X$ are equivalent; such derived equivalences tend to arise from exceptional collections \cite{MR928291,MR992977}, which forces $\Lambda$ to be \emph{triangular}, so that $\Lambda=\k{}Q/(R)$ is a quotient of the path algebra of a quiver $Q$ without directed cycles.

Gröbner bases methods~\cite{MR1227656} allow one to view any such algebra as a filtered ``deformation'' of a \emph{monomial} algebra, that is of an algebra whose relations are all paths, and not nontrivial linear combinations of paths. In \cite{MR2185621}, Bustamante conjectured that if $\Lambda$ is a monomial triangular algebra, then the cup product of $HH^\bullet(\Lambda,\Lambda)$ vanishes on elements of positive degree. This conjecture was recently proved by Artenstein, Letz, Oswald and the second author \cite{MR4750103}. Note that from the point of view of derived categories, monomial algebras are known to occasionally exhibit somewhat pathological behaviour: the category of perfect complexes of modules may not have exceptional objects \cite{MR1116189}, or may have nonextendable exceptional objects \cite{kuznetsov2013simplecounterexamplejordanholderproperty}.

The usual semi-continuity arguments \cite[Sec.~III.12]{MR0463157} imply that the ranks of Hochschild cohomology groups of an algebra $\Lambda$ do not exceed the corresponding ranks for the monomial algebra whose relations are the leading terms of a Gröbner basis of $\Lambda$. A somewhat naïve first reflex would be to suggest that some sort of semi-continuity also holds for the \emph{structure}, and to extend the conjecture of Bustamante by suggesting that the vanishing of the cup product holds for more general triangular algebras. In this short note, we discuss several examples of triangular algebras that show this expectation to be extremely far from true. Our examples are of two flavours: incidence algebras of two posets whose geometric realizations is the torus $\mathbb{T}^2$, treated in Section \ref{sec:incidence}, and two algebras arising from exceptional collections in categories of coherent sheaves on $\mathbb{P}^1\times\mathbb{P}^1$, treated in Section \ref{sec:exceptional}. One of the two latter algebras is merely the tensor square of the path algebra of the Kronecker quiver; in fact, already Green and Solberg~\cite{MR2067380} indicate that tensor products of algebras may be used to provide examples of algebras with nonzero cup products of Hochschild cohomology. 
Most of the algebras $\Lambda$ we encounter are quadratic Koszul algebras of global dimension two, which suggests that, beyond the case of monomial triangular algebras, there is unlikely to be any larger natural class of algebras for which the nontrivial cup products vanish. 

Besides indicating that examples of nonzero cup products in Hochschild cohomology of triangular algebras are abundant, once one looks at algebras of geometric and topological origin, we investigate the Hochschild cohomology of deformations of our algebras, describing how the dimension of the Hochschild cohomology depends on the deformation parameters; the corresponding results are presented in Sections \ref{sec:defS1S1} and \ref{sec:defP1P1}. Interestingly enough, even where is no dimension drop for Hochschild cohomology of the deformed algebra, most nonzero cup products of elements of positive degrees seem to ``disappear'' after deformation. Our computations of Hochschild cohomology for deformations of derived $\mathbb{P}^1\times\mathbb{P}^1$, presented in Section \ref{sec:defP1P1}, exhibit another interesting feature: they are qualitatively different from those of Belmans \cite{MR3988086}, suggesting that while derived equivalent algebras have isomorphic Hochschild cohomology, the cohomological behaviour of their deformations may differ drastically.

\subsection*{Conventions. }All vector spaces in this note are defined over a ground field $\k$; when assumptions on its characteristic are necessary, we give them explicitly.
We refer the reader to the textbook of Witherspoon \cite{MR3971234} for all the relevant definitions concerning the Hochschild cohomology of associative algebras, and to the survey of Martínez-Villa \cite{MR2388890} and the article of Chouhy and the second author \cite{MR3334140} for notations and conventions concerning quotients of path algebras and their cohomological invariants. The \texttt{Magma} \cite{MR1484478} and \texttt{QPA} \cite{qpa} code used to check some of the claims of the paper is available upon request.

\subsection*{Acknowledgements}

This research was supported by the Math-AmSud project HHMA (23-MATH-06) and by Institut Universitaire de France. We acknowledge interesting conversations with Pedro Tamaroff. Special thanks are due to Maxim Smirnov for discussions of exceptional collections in triangulated categories and to \O{}yvind Solberg for patient explanations of the \texttt{QPA} package \cite{qpa}. We are also indebted to Pieter Belmans, Claude Cibils and Nikita Markarian for useful comments on a draft version of the paper. 

\section{Two algebraic versions of \texorpdfstring{$\mathbb{T}^2$}{T2}}\label{sec:incidence}

In this section, we examine two examples of nonzero cup products arising from two different algebraic versions for $\mathbb{T}^2$, by which we mean incidence algebras $I(P)$ of partially ordered sets $P$ whose geometric realization is the torus $\mathbb{T}^2$. As shown by Gerstenhaber and Schack \cite{MR722369}, for a poset $P$, the Hochschild cohomology of the incidence algebra $I(P)$ is isomorphic to the simplicial cohomology of the geometric realization of $P$; crucially for us, this isomorphism is a Gerstenhaber algebra isomorphism (more precisely, it is compatible with cup products, and the Gerstenhaber bracket vanishes). Thus, if the geometric realization of $P$ is the torus, then we have a Gerstenhaber algebra isomorphism
 \[
HH^\bullet(I(P),I(P))\cong H^\bullet(\mathbb{T}^2)\cong H^\bullet(S^1)^{\otimes 2}.   
 \] 
In particular, the Hochschild cohomology of such an algebra always has a nonzero cup product; of course, incidence algebras are always triangular. Exceptional collections for incidence algebras are discussed in \cite{MR2357344}; we are not going to use them in this paper. 

\subsection{Two posets with the homotopy type of a torus}

The minimal simplicial torus~$\mathbb{T}^2_s$ is displayed\footnote{We learned this particularly beautiful way of representing it from the notes of Boardman \cite{torus}.} in Figure \ref{fig1}; it is essentially a distorted rectangle drawn on the grid of regular triangles (the vertex labels and the arrows on the opposite sides should help the reader to unambiguously glue it into a torus). 

\begin{figure}[h]
  \centering
\begin{tikzpicture}[scale=1, every node/.style={font=\sffamily},
    vert/.style={inner sep=0pt, minimum size=0pt},
    triedge/.style={line width=0.8pt},
    idarrow/.style={-{Stealth[length=6pt,width=6pt]},semithick},
    doublehead/.style={
    semithick,
    postaction={
      decorate,
      decoration={
        markings,
        mark=at position 0.88 with {\arrow{Stealth[length=6pt,width=6pt]}},
        mark=at position 0.96 with {\arrow{Stealth[length=6pt,width=6pt]}}
      }}}  ]

\def\side{1.4}                    
\pgfmathsetmacro{\hs}{sqrt(3)/2*\side}  

\node[vert] (Atop) at (0*\side,4*\hs) {$0$};
\node[vert] (Btop) at (1*\side,4*\hs) {$1$};
\node[vert] (Ctop) at (2*\side,4*\hs) {$2$};
\node[vert] (F13) at (0.5*\side,3*\hs) {$3$};
\node[vert] (G13) at (1.5*\side,3*\hs) {$5$};
\node[vert] (A13) at (2.5*\side,3*\hs) {$0$};
\node[vert] (D2) at (1*\side,2*\hs) {$4$};
\node[vert] (E2) at (2*\side,2*\hs) {$6$};
\node[vert] (F2) at (3*\side,2*\hs) {$3$};
\node[vert] (A10) at (0.5*\side,1*\hs) {$0$};
\node[vert] (B10) at (1.5*\side,1*\hs) {$1$};
\node[vert] (C10) at (2.5*\side,1*\hs) {$2$};
\node[vert] (D10) at (3.5*\side,1*\hs) {$4$};
\node[vert] (Abot) at (3*\side,0*\hs) {$0$};

\foreach \u/\v in {
  Atop/Btop, Btop/Ctop,
  Atop/F13, Btop/F13, Btop/G13, Ctop/G13, Ctop/A13,
  F13/G13, G13/A13, F13/D2, G13/D2, G13/E2, A13/E2, A13/F2,
  D2/E2, E2/F2,
  D2/A10, D2/B10, E2/B10, E2/C10, F2/C10, F2/D10,
  A10/B10, B10/C10, C10/D10,
  C10/Abot, D10/Abot} {
  \draw[triedge] (\u) -- (\v);
}

\draw[idarrow] (Atop) -- (Btop);
\draw[idarrow] (Btop) -- (Ctop);
\draw[idarrow] (Ctop) -- (A13);
\draw[idarrow] (A10) -- (B10);
\draw[idarrow] (B10) -- (C10);
\draw[idarrow] (C10) -- (Abot);

\draw[doublehead] (Atop) -- (F13);
\draw[doublehead] (F13) -- (D2);
\draw[doublehead] (D2) -- (A10);
\draw[doublehead] (A13) -- (F2);
\draw[doublehead] (F2) -- (D10);
\draw[doublehead] (D10) -- (Abot);
\end{tikzpicture}  
  \caption{The minimal simplicial torus~$\mathbb{T}^2_s$.}\label{fig1}
\end{figure}
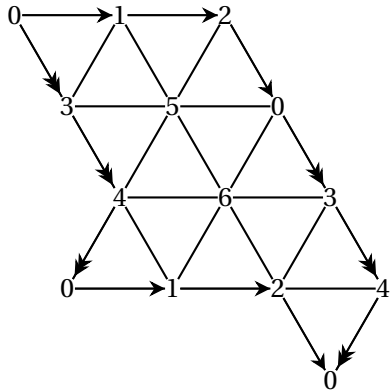

This simplicial complex has seven vertices $\{0,1,\ldots,6\}$, all $21$ edges between them are $1$-faces, and there are $14$ faces of dimension two. The incidence algebra $I(\mathbb{T}^2_s)$ is thus the quotient of the path algebra of the quiver $Q(\mathbb{T}^2_s)$ on $42$ vertices with arrows $\alpha^a_{a,b}$ that correspond to embedding $\{a\}\subset\{a,b\}$ of a vertex into a $1$-face and arrows $\beta^{a,b}_{a,b,c}$ of $1$-faces into $2$-faces. Relations of this particular incidence algebra are all quadratic; a typical relation corresponds to a vertex $a$ of a $2$-face $\{a,b,c\}$, and is of the form
 \[
\beta^{a,b}_{a,b,c}\alpha^a_{a,b}=\beta^{a,c}_{a,b,c} \alpha^a_{a,c} . 
 \]  

The minimal cubical torus~$\mathbb{T}^2_c$ is displayed in Figure \ref{fig2} (the vertex labels and the arrows on the opposite sides should help the reader to unambiguously glue it into a torus).
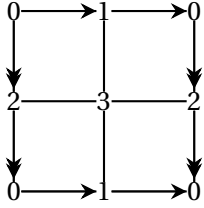
\begin{figure}[h]
  \centering

\begin{tikzpicture}[scale=1, every node/.style={font=\sffamily},
    vert/.style={inner sep=0pt, minimum size=0pt},
    triedge/.style={line width=0.8pt},
    idarrow/.style={-{Stealth[length=6pt,width=6pt]},semithick},
    doublehead/.style={
    semithick,
    postaction={
      decorate,
      decoration={
        markings,
        mark=at position 0.88 with {\arrow{Stealth[length=6pt,width=6pt]}},
        mark=at position 0.96 with {\arrow{Stealth[length=6pt,width=6pt]}}
      }}}  ]

\def\side{1.2}                    
\pgfmathsetmacro{\hs}{\side}  

\node[vert] (A2) at (0*\side,2*\hs) {$0$};
\node[vert] (B2) at (1*\side,2*\hs) {$1$};
\node[vert] (C2) at (2*\side,2*\hs) {$0$};
\node[vert] (A1) at (0*\side,1*\hs) {$2$};
\node[vert] (B1) at (1*\side,1*\hs) {$3$};
\node[vert] (C1) at (2*\side,1*\hs) {$2$};
\node[vert] (A0) at (0*\side,0*\hs) {$0$};
\node[vert] (B0) at (1*\side,0*\hs) {$1$};
\node[vert] (C0) at (2*\side,0*\hs) {$0$};

\foreach \u/\v in {
  A2/B2, B2/C2,
  A2/A1, A1/B1, B1/C1, B2/B1, C2/C1,
  A1/A0, A0/B0, B0/C0, B1/B0, C1/C0} {
  \draw[triedge] (\u) -- (\v);
}

\draw[idarrow] (A2) -- (B2);
\draw[idarrow] (B2) -- (C2);
\draw[idarrow] (A0) -- (B0);
\draw[idarrow] (B0) -- (C0);

\draw[doublehead] (A2) -- (A1);
\draw[doublehead] (A1) -- (A0);
\draw[doublehead] (C2) -- (C1);
\draw[doublehead] (C1) -- (C0);
\end{tikzpicture}  
 \caption{The minimal cubical torus~$\mathbb{T}^2_c$.}\label{fig2}
\end{figure}  

This simplicial complex has four $0$-faces $\{0,1,2,3\}$, eight edges between them as $1$-faces, and four faces of dimension two. The incidence algebra $I(\mathbb{T}^2_c)$ is thus the quotient of the path algebra of the quiver $Q(\mathbb{T}^2_c)$ on $16$ vertices with arrows $\alpha^a_{a,b}$ that correspond to embedding $\{a\}\subset\{a,b\}$ of a $0$-face into a $1$-face and arrows $\beta^{a,b}_{a,b,c,d}$ of $1$-faces into $2$-faces. Relations of this particular incidence algebra are all quadratic; a typical relation corresponds to a $0$-face $\{a\}$ of a $2$-face $\{a,b,c,d\}$, and is of the form
 \[
\beta^{a,b}_{a,b,c,d}\alpha^a_{a,b}=\beta^{a,d}_{a,b,c,d} \alpha^a_{a,d} . 
 \]  

Note that in both cases each relation of the incidence algebra corresponds to a $2$-face and one of its $0$-faces, and has two terms that correspond to the $1$-faces of the given $2$-face that contain the given $0$-face; it will be convenient to direct the relation so that the direction from the $1$-face corresponding to the leading term to the $1$-face corresponding to the irreducible path is positive (counterclockwise) for the standard orientation of the torus. 

From the main result of \cite{MR722369}, we immediately derive the following statement.

\begin{proposition}
We have isomorphisms of graded commutative algebras
 \[
HH^\bullet(I(\mathbb{T}^2_s),I(\mathbb{T}^2_s))\cong H^\bullet(\mathbb{T}^2,\k)\cong HH^\bullet(I(\mathbb{T}^2_c),I(\mathbb{T}^2_c)).     
 \]
In particular, the cup product of the Hochschild cohomology of the incidence algebras $I(\mathbb{T}^2_s)$ and $I(\mathbb{T}^2_c)$ is nonzero.
\end{proposition}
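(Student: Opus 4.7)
The plan is to apply the Gerstenhaber–Schack theorem recalled in the paragraph preceding the statement, which identifies the Hochschild cohomology of the incidence algebra of a poset $P$ with the simplicial cohomology of the geometric realization $|P|$, as Gerstenhaber algebras. Since the Gerstenhaber bracket vanishes in this setting, the cup-product algebra structure is recovered entirely, and all that remains is a topological calculation.

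First I would verify that the algebras $I(\mathbb{T}^2_s)$ and $I(\mathbb{T}^2_c)$ introduced in the section are indeed the incidence algebras of the face posets $P_s$ and $P_c$ of the simplicial complex in Figure \ref{fig1} and of the cubical complex in Figure \ref{fig2}, ordered by inclusion. The quivers described in the text are the Hasse diagrams of these posets (the arrows $\alpha$ and $\beta$ are the covering relations), and the quadratic relations spelled out — one for each pair consisting of a $2$-face and one of its vertices — are precisely the commuting-square relations defining the incidence algebra, since each $2$-face contains exactly two $1$-faces through a given vertex, both in the triangular and in the cubical case. Next, I would invoke the classical fact that the order complex of the face poset of a regular CW complex $X$ is the barycentric subdivision of $X$, hence homeomorphic to $X$; applied here, this gives $|P_s| \cong \mathbb{T}^2 \cong |P_c|$.

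The Gerstenhaber–Schack isomorphism then yields the stated chain of graded commutative algebra isomorphisms, and the nonvanishing of the cup product follows from the Künneth decomposition $H^\bullet(\mathbb{T}^2,\k) \cong H^\bullet(S^1,\k)^{\otimes 2}$ already mentioned in the section introduction: the two degree-one classes pulled back from the two circle factors cup to a nonzero class in degree two. There is no substantive obstacle here — given the Gerstenhaber–Schack theorem as a black box, the entire argument is mechanical bookkeeping about identifying posets with face posets. The real content of this portion of the paper is presumably not the proposition itself but the explicit chain-level description of cup-product generators needed to analyse deformations of $I(\mathbb{T}^2_s)$ and $I(\mathbb{T}^2_c)$ later on.
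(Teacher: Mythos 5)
Your proposal is correct and follows essentially the same route as the paper: the proposition is an immediate consequence of the Gerstenhaber--Schack isomorphism once the posets are recognized as the face posets of the complexes in Figures \ref{fig1} and \ref{fig2}, whose geometric realizations are the torus. The extra detail you supply (order complex of the face poset as barycentric subdivision, and the Künneth argument for nonvanishing of the cup product) is exactly the bookkeeping the paper leaves implicit.
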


While the torus gives one of the smallest possible examples with a nonzero cup product in the cohomology, one can build an incidence algebra for any triangulation of a topological space with nonzero products in cohomology, e.g., $\mathbb{C}P^n$ for $n>1$. However, an additional nice feature of our example is that all paths in $\k Q(\mathbb{T}^2_s)$ and $\k Q(\mathbb{T}^2_c)$ are of length at most two, and therefore the algebras $I(\mathbb{T}^2_s)$ and $I(\mathbb{T}^2_c)$ possess some additional agreeable properties: they are Koszul and of global dimension two.  

In the next section, we shall examine Hochschild cohomology of deformations of the algebras $I(\mathbb{T}^2_s)$ and $I(\mathbb{T}^2_c)$. Let us record the main result that we shall require.

\begin{proposition}\label{prop:HHincid}
Each of the incidence algebras $I(\mathbb{T}^2_s)$ and $I(\mathbb{T}^2_c)$ admits, up to equivalence, a one-parametric family of deformations, defined over $\k[\hbar]\subset\k[[\hbar]]$. If we denote these families by $I_q(\mathbb{T}^2_s)$ and $I_q(\mathbb{T}^2_c)$ respectively, with $q=1+\hbar$, then 
\begin{enumerate}
\item the algebra $I_q(\mathbb{T}^2_s)$ has defining relations corresponding to inclusions of a $0$-face $\{a\}$ into a $2$-face $\{a,b,c\}$ of $\mathbb{T}^2_s$; each such relation is of the form
 \[
\beta^{a,b}_{a,b,c}\alpha^a_{a,b}=q\beta^{a,c}_{a,b,c} \alpha^a_{a,c} ,
 \]  
where the direction from $\{a,b\}$ to $\{a,c\}$ is positive for the standard orientation of the torus. 

\item the algebra $I_q(\mathbb{T}^2_c)$ has defining relations corresponding to inclusions of a $0$-face $\{a\}$ into a $2$-face $\{a,b,c,d\}$ of $\mathbb{T}^2_c$; each such relation is of the form
 \[
\beta^{a,b}_{a,b,c,d}\alpha^a_{a,b}=q\beta^{a,d}_{a,b,c,d} \alpha^a_{a,d} , 
 \]  
where the direction from $\{a,b\}$ to $\{a,d\}$ is positive for the standard orientation of the torus.
\end{enumerate}
\end{proposition}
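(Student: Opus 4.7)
My plan is to combine Gerstenhaber's deformation theory with the explicit structure of the quivers. The Gerstenhaber--Schack isomorphism, together with $H^2(\mathbb{T}^2,\k)\cong\k$ and $H^3(\mathbb{T}^2,\k)=0$, gives $HH^2\cong\k$ and $HH^3=0$ for both incidence algebras. By Gerstenhaber's theorem the formal moduli of deformations is therefore smooth of dimension one, so up to equivalence there is a unique one-parameter family; to prove the proposition it suffices to verify that the explicit families written down in the statement are flat and that their first-order term is nonzero in $HH^2$.

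For flatness of $I_q(\mathbb{T}^2_s)$ and $I_q(\mathbb{T}^2_c)$ I would invoke Bergman's diamond lemma. Fix any path order in which the leading monomial of each deformed relation is $\beta^{a,b}_{a,b,c}\alpha^a_{a,b}$ (respectively $\beta^{a,b}_{a,b,c,d}\alpha^a_{a,b}$). Because each of the two complexes is two-dimensional, the quivers $Q(\mathbb{T}^2_s)$ and $Q(\mathbb{T}^2_c)$ admit no directed paths of length three, so there are no overlap ambiguities between pairs of quadratic relations and no inclusion ambiguities. The diamond lemma applies trivially, and $I_q$ admits, as a $\k[q]$-module, a basis of paths irreducible with respect to these leading monomials; this basis is independent of $q$, proving flatness of the family.

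It remains to check that the infinitesimal term at $q=1$ represents a nonzero class in $HH^2$. Writing $q=1+t$, the relations of $I_q$ acquire a correction of the form $-t\,\beta^{a,c}_{a,b,c}\alpha^a_{a,c}$ (and analogously in the cubical case), so the associated Hochschild $2$-cocycle pairs each composable pair of arrows contributing to a $2$-face of $\mathbb{T}^2$ with a prescribed orientation sign. Under the Gerstenhaber--Schack quasi-isomorphism from the Hochschild cochain complex to the simplicial cochain complex of the order complex of the face poset, this corresponds to the simplicial $2$-cochain assigning $\pm 1$ to each $2$-face according to the chosen counterclockwise orientation. Orientability of $\mathbb{T}^2$ makes this cochain closed, and since its support is all $2$-faces it cannot be exact; hence it represents the fundamental class, a generator of $H^2(\mathbb{T}^2,\k)\cong HH^2$. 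Combined with unobstructedness, this identifies $I_q$ as the universal one-parameter deformation.

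The main obstacle is the last step: translating, through the Gerstenhaber--Schack machinery, the infinitesimal variation in $q$ into an explicit simplicial $2$-cochain, and verifying that the prescribed orientation convention on the torus actually makes it a cocycle whose cohomology class is nontrivial. Everything else is formal and follows from the vanishing of $HH^3$ together with the diamond lemma.
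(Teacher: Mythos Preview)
Your proposal is correct and reaches the same conclusion, but the route differs from the paper's in one essential step. Both you and the paper note that the absence of length-three paths makes the relations a Gr\"obner basis trivially, so flatness is immediate; and both use $HH^2\cong\k$, $HH^3=0$ to reduce the problem to identifying the nontrivial infinitesimal class. Where you diverge is in that identification: the paper writes down the Koszul complex explicitly, computes the rightmost differential $\partial$ on the basis elements $\alpha^a_{a,b}||\alpha^a_{a,b}$ and $\beta^{a,b}_{a,b,c}||\beta^{a,b}_{a,b,c}$, and observes directly that modulo $\mathrm{im}\,\partial$ all the elements $m^a_{a,b,c}||n^a_{a,b,c}$ coincide, so the unique nontrivial class is their sum, which is visibly the first-order term of the $q$-family. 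You instead invoke the Gerstenhaber--Schack comparison and argue that the first-order cocycle corresponds to the simplicial fundamental class of $\mathbb{T}^2$. Your approach is cleaner conceptually and avoids the explicit differential, at the cost of having to chase the Gerstenhaber--Schack quasi-isomorphism (which, as you rightly flag, is where the work hides: the Koszul $2$-cochains are indexed by \emph{angles}---pairs (vertex, $2$-face)---rather than by $2$-simplices of the barycentric subdivision, so the matching with the fundamental class is not a literal identification of cochains). The paper's approach is more pedestrian but pays off immediately in the next result, where the same explicit formulas for $\partial$ are reused with the parameter $q$ inserted to compute $HH^\bullet(I_q)$; your argument would not transfer to that setting without redoing the computation anyway.
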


\begin{proof}
Note that $H^2(\mathbb{T}^2,\mathbb{Z})\cong\mathbb{Z}$, and therefore 
 \[
HH^2(I(\mathbb{T}^2_s),I(\mathbb{T}^2_s))\cong \k \cong HH^2(I(\mathbb{T}^2_c),I(\mathbb{T}^2_c)),       
 \]
meaning that, up to equivalence, each of the incidence algebras $I(\mathbb{T}^2_s)$ and $I(\mathbb{T}^2_c)$ has exactly one direction of infinitesimal deformations. To describe these infinitesimal deformations, we would like to know the non-zero classes of $HH^2$ explicitly, which requires a short computation. Let us explain it in detail in the case of $I(\mathbb{T}^2_s)$; the case of $I(\mathbb{T}^2_c)$ is completely analogous. 

In this case, one may use the Koszul complex \cite[Chapter~3]{MR3971234}, or the resolution of Chouhy and the second author \cite{MR3334140}; since the quiver $Q(\mathbb{T}^2_s)$ has no path of length three, it is obvious that the relations of $I(\mathbb{T}^2_s)$ form a Gröbner basis, and the two resolutions are easily seen to be isomorphic, giving the following complex computing the Hochschild cohomology:
 \[
0\to \k Q(\mathbb{T}^2_s)_0||Q(\mathbb{T}^2_s)_0 \to \k Q(\mathbb{T}^2_s)_1||Q(\mathbb{T}^2_s)_1\to \k L_2^s||N_2^s\to 0   , 
 \]
where $L_2^s=\{m^a_{a,b,c}\}$ is the set of leading terms of relations, and $N_2^s=\{n^a_{a,b,c}\}$ is the set of irreducible paths; note that for any pair of vertices of $Q(\mathbb{T}^2_s)$ corresponding to a $0$-face incident to a $2$-face, there are exactly two paths of length two between them, one of which is the leading term of the relation and the other is an irreducible path. Moreover, for the rightmost differential of that complex, we have
\begin{gather*}
\partial(\alpha^a_{a,b}||\alpha^a_{a,b})=m^a_{a,b,c}||n^a_{a,b,c}-m^a_{a,b,d}||n^a_{a,b,d},\\ 
\partial(\beta^{a,b}_{a,b,c}||\beta^{a,b}_{a,b,c})= m^a_{a,b,c}||n^a_{a,b,c}-m^b_{a,b,c}||n^b_{a,b,c},
\end{gather*}    
where $\{a,b,c\}$ and $\{a,b,d\}$ are the two $2$-faces containing the $1$-face $\{a,b\}$, so that the direction from $\{a,b\}$ to $\{a,c\}$ is positive, and the direction from $\{a,b\}$ to $\{a,d\}$ is negative. This implies that modulo the image of the differential all the elements $m^a_{a,b,c}||n^a_{a,b,c}$ are equal to each other, meaning that, up to equivalence, the nontrivial infinitesimal deformation is defined by the relations
 \[
\beta^{a,b}_{a,b,c}\alpha^a_{a,b}-\beta^{a,c}_{a,b,c} \alpha^a_{a,c}=\hbar \beta^{a,c}_{a,b,c} \alpha^a_{a,c},      
 \]
where the direction from $\{a,b\}$ to $\{a,c\}$ is positive. If we denote $1+\hbar=:q$, we obtain exactly the relations indicated above. 

It remains to use the fact that the quiver $Q(\mathbb{T}^2_s)$ has no path of length three once again: it immediately follows that our infinitesimal deformation is defined over $\k[\hbar]\subset\k[[\hbar]]$, and not just over $\k[h]/(\hbar^2)$, as an infinitesimal deformation usually would be. Indeed, our relations take care of products of two arrows, and powers of $\hbar$ would only appear when computing products of at least three arrows. This completes the proof.  
\end{proof}

\subsection{Hochschild cohomology of deformations of the incidence algebras}\label{sec:defS1S1}

Deformations of incidence algebras have been recently studied by Iovanov and Koffi \cite{MR4388790}; the arXiv version of \emph{op. cit.} raises a question of computing the Hoch\-schild cohomology of those deformations and conjectures that the result will be the same as in the undeformed case. Let us show that this is generally not the case.  

\begin{theorem} For the Hochschild cohomology of the deformed algebras $I_q(\mathbb{T}^2_s)$ and $I_q(\mathbb{T}^2_c)$,
\begin{enumerate}
\item we have
 \[
HH^\bullet(I_q(\mathbb{T}^2_s),I_q(\mathbb{T}^2_s))\cong
 \begin{cases}
H^\bullet(\mathbb{T}^2)\quad  \text{ if } q^3=1,\\
H^\bullet(S^1) \quad \text{ otherwise. }
 \end{cases} 
 \]
\item  
we have
 \[
HH^\bullet(I_q(\mathbb{T}^2_c),I_q(\mathbb{T}^2_c))\cong
 \begin{cases}
H^\bullet(\mathbb{T}^2)\quad  \text{ if } q^4=1,\\
H^\bullet(S^1) \quad \text{ otherwise. }
 \end{cases} 
 \]
\end{enumerate} 
\end{theorem}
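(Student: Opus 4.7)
The plan is to extend the cohomological computation from the proof of Proposition \ref{prop:HHincid} to the $q$-deformed algebras. Since the quivers $Q(\mathbb{T}^2_s)$ and $Q(\mathbb{T}^2_c)$ contain no path of length three, the leading terms of the deformed relations again form a Gröbner basis for trivial reasons, so the Chouhy--Solotar resolution of $I_q$ as a bimodule still has length two, and the same three-term cochain complex
$$
0 \to \k Q_0 || Q_0 \to \k Q_1 || Q_1 \to \k L_2 || N_2 \to 0
$$
computes the Hochschild cohomology. Its Euler characteristic is $\chi(\mathbb{T}^2) = 0$ for every $q$.

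I would then write down the deformed rightmost differential by tracking the coefficients in each deformed relation $r = m - q n$:
\begin{align*}
\partial_q^1(\alpha^a_{a,b} || \alpha^a_{a,b}) & = m^a_{a,b,c} || n^a_{a,b,c} - q\, m^a_{a,b,d} || n^a_{a,b,d}, \\
\partial_q^1(\beta^{a,b}_F || \beta^{a,b}_F) & = m^a_F || n^a_F - q\, m^b_F || n^b_F
\end{align*}
in the simplicial case (the formulas in the cubical case are analogous). Dualising, a linear functional $\psi$ on $\k L_2 || N_2$, given by scalars $\psi^a_F$ indexed by (vertex, face) pairs, annihilates the image of $\partial_q^1$ if and only if $\psi^a_F = q\, \psi^a_{F'}$ at every edge constraint and $\psi^a_F = q\, \psi^b_F$ at every face constraint. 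These equations describe a ``$q$-parallel transport'' on the graph $\Gamma$ whose vertices are the (vertex, face) pairs; solutions are flat sections of the associated $\k^\times$-bundle, and so $HH^2(I_q, I_q) = \operatorname{coker}(\partial_q^1)$ is at most one-dimensional, with dimension equal to $1$ precisely when the monodromy around every closed cycle in $\Gamma$ is trivial.

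The shortest non-trivial cycle in $\Gamma$ traces the vertices of a single face, giving length $3$ (resp.\ $4$) and monodromy $q^3$ (resp.\ $q^4$); this yields the necessary condition $q^N = 1$ with $N = 3$ or $4$. The main obstacle is to prove sufficiency: when $q^N = 1$, every closed cycle of $\Gamma$ has trivial monodromy. This can be established by a direct rank computation on the matrix of $\partial_q^1$ (of sizes $42 \times 84$ for the simplicial case and $16 \times 32$ for the cubical case), showing that the generic rank drops by exactly one precisely when $q^N = 1$; more conceptually, one can verify that modulo trivial ``back-and-forth'' cycles the cycle space of $\Gamma$ is spanned by face cycles and the two fundamental wrapping cycles of the torus, each of which has monodromy equal to a power of $q^N$. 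Once $\dim HH^2(I_q, I_q)$ is known, the remainder is immediate: $HH^0(I_q, I_q)$ is the centre of $I_q$, which is one-dimensional for every $q$ (since the quiver is connected and acyclic), the Euler-characteristic relation yields $\dim HH^1 = 1 + \dim HH^2$, and the resulting graded dimensions match those of $H^\bullet(\mathbb{T}^2)$ when $q^N = 1$ and of $H^\bullet(S^1)$ otherwise. The graded algebra isomorphism can then be checked by computing cup products of cocycle representatives.
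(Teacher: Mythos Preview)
Your proposal is correct and follows the same strategy as the paper: both use the length-two Chouhy--Solotar/Koszul complex, write down the $q$-deformed rightmost differential, determine $\dim HH^2$ first, and then recover $\dim HH^1$ from $HH^0=\k$ together with the Euler characteristic; both also defer the cup-product verification. The only real difference is in the $q^N=1$ step: you argue abstractly via monodromy on the angle graph $\Gamma$ (or a matrix rank drop), while the paper exhibits an explicit nonzero linear functional on $\k L_2\|N_2$ by labelling the internal angles of the triangles with $a,b,c$ (so that around each face and around each vertex the labels cycle) and sending an angle to $1,q,q^2$ according to its label---this is precisely a flat section for your parallel transport, and it makes the sufficiency of $q^N=1$ immediate without having to analyse the full cycle space of~$\Gamma$ or appeal to a rank computation.
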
 

\begin{proof}
Once again, we shall consider the case of the algebra $I_q(\mathbb{T}^2_s)$ in detail, since the case of $I_q(\mathbb{T}^2_c)$ is completely analogous (and somewhat simpler), and use the same strategy as in Proposition \ref{prop:HHincid} for computing the Hochschild cohomology, that is via the complex
 \[
0\to \k Q(\mathbb{T}^2_s)_0||Q(\mathbb{T}^2_s)_0 \to \k Q(\mathbb{T}^2_s)_1||Q(\mathbb{T}^2_s)_1\to \k L_2^s||N_2^s\to 0   , 
 \]
where $L_2^s=\{m^a_{a,b,c}\}$ is the set of leading terms of relations, and $N_2^s=\{n^a_{a,b,c}\}$ is the set of irreducible paths of length two. The rightmost differential of that complex is given by
\begin{gather}
\partial(\alpha^a_{a,b}||\alpha^a_{a,b})=m^a_{a,b,c}||n^a_{a,b,c}-qm^a_{a,b,d}||n^a_{a,b,d},\label{eq:ar1}\\ 
\partial(\beta^{a,b}_{a,b,c}||\beta^{a,b}_{a,b,c})= m^a_{a,b,c}||n^a_{a,b,c}-qm^b_{a,b,c}||n^b_{a,b,c},\label{eq:ar2}
\end{gather}    
where $\{a,b,c\}$ and $\{a,b,d\}$ are the two $2$-faces containing the $1$-face $\{a,b\}$, so that the direction from $\{a,b\}$ to $\{a,c\}$ is positive, and the direction from $\{a,b\}$ to $\{a,d\}$ is negative.

If we sum Relations \eqref{eq:ar2} with coefficients $1,q,q^2$ over the three $1$-faces of a $2$-face $\{a,b,c\}$, we obtain $(1-q^3)m^a_{a,b,c}||n^a_{a,b,c}$, which shows that for $q^3\ne 1$ the differential $\partial$ is surjective, so $HH^2(I_q(\mathbb{T}^2_s),I_q(\mathbb{T}^2_s))=0$. Moreover, the center of the algebra $I_q(\mathbb{T}^2_s)$ is manifestly trivial, so $HH^0(I_q(\mathbb{T}^2_s),I_q(\mathbb{T}^2_s))=\k$. Computing the Euler characteristic of the complex above, we obtain
 \[
42-(7\cdot 6+14\cdot 2)+14\cdot 2=0,     
 \]
which implies that $\dim HH^1(I_q(\mathbb{T}^2_s),I_q(\mathbb{T}^2_s))=1$, proving that for $q^3\ne 1$, we have $HH^\bullet(I_q(\mathbb{T}^2_s),I_q(\mathbb{T}^2_s))\cong H^\bullet(S^1)$.

For $q^3=1$, the above formulas for the differential immediately show that modulo its image all elements $m^a_{a,b,c}||n^a_{a,b,c}$ are proportional, which implies that $\dim HH^2(I_q(\mathbb{T}^2_s),I_q(\mathbb{T}^2_s))\le 1$. Let us show that $HH^2(I_q(\mathbb{T}^2_s),I_q(\mathbb{T}^2_s))\ne 0$. Note that one can label the internal angles of all triangles of the triangular grid by the letters $x,y,z$ so that the six angles around each vertex have, in the counterclockwise order, the labels $x,y,z,x,y,z$, and the three angles of each face have, in the clockwise order, the labels $x,y,z$. Restricting this to $\mathbb{T}^2_s$, we obtain a labelling with the same property (one of such labellings is displayed in Figure \ref{fig3}).
\begin{figure}[h]
  \centering
\begin{tikzpicture}[scale=1, every node/.style={font=\sffamily},
    vert/.style={inner sep=0pt, minimum size=0pt},
    triedge/.style={line width=0.8pt},
    idarrow/.style={-{Stealth[length=6pt,width=6pt]},semithick},
    doublehead/.style={
    semithick,
    postaction={
      decorate,
      decoration={
        markings,
        mark=at position 0.88 with {\arrow{Stealth[length=6pt,width=6pt]}},
        mark=at position 0.96 with {\arrow{Stealth[length=6pt,width=6pt]}}
      }}}  ]

\def\side{1.4}                    
\pgfmathsetmacro{\hs}{sqrt(3)/2*\side}  

\node[vert] (Atop) at (0*\side,4*\hs) {$0$};
\node[vert] (Btop) at (1*\side,4*\hs) {$1$};
\node[vert] (Ctop) at (2*\side,4*\hs) {$2$};
\node[vert] (F13) at (0.5*\side,3*\hs) {$3$};
\node[vert] (G13) at (1.5*\side,3*\hs) {$5$};
\node[vert] (A13) at (2.5*\side,3*\hs) {$0$};
\node[vert] (D2) at (1*\side,2*\hs) {$4$};
\node[vert] (E2) at (2*\side,2*\hs) {$6$};
\node[vert] (F2) at (3*\side,2*\hs) {$3$};
\node[vert] (A10) at (0.5*\side,1*\hs) {$0$};
\node[vert] (B10) at (1.5*\side,1*\hs) {$1$};
\node[vert] (C10) at (2.5*\side,1*\hs) {$2$};
\node[vert] (D10) at (3.5*\side,1*\hs) {$4$};
\node[vert] (Abot) at (3*\side,0*\hs) {$0$};

\foreach \u/\v in {
  Atop/Btop, Btop/Ctop,
  Atop/F13, Btop/F13, Btop/G13, Ctop/G13, Ctop/A13,
  F13/G13, G13/A13, F13/D2, G13/D2, G13/E2, A13/E2, A13/F2,
  D2/E2, E2/F2,
  D2/A10, D2/B10, E2/B10, E2/C10, F2/C10, F2/D10,
  A10/B10, B10/C10, C10/D10,
  C10/Abot, D10/Abot} {
  \draw[triedge] (\u) -- (\v);
}

\draw[idarrow] (Atop) -- (Btop);
\draw[idarrow] (Btop) -- (Ctop);
\draw[idarrow] (Ctop) -- (A13);
\draw[idarrow] (A10) -- (B10);
\draw[idarrow] (B10) -- (C10);
\draw[idarrow] (C10) -- (Abot);

\draw[doublehead] (Atop) -- (F13);
\draw[doublehead] (F13) -- (D2);
\draw[doublehead] (D2) -- (A10);
\draw[doublehead] (A13) -- (F2);
\draw[doublehead] (F2) -- (D10);
\draw[doublehead] (D10) -- (Abot);

\pic ["$x$"] {angle = G13--F13--Btop};
\pic ["$y$"] {angle = F13--Btop--G13};
\pic ["$z$"] {angle = Btop--G13--F13};

\pic ["$x$"] {angle = Atop--Btop--F13};
\pic ["$y$"] {angle = Btop--F13--Atop};
\pic ["$z$"] {angle = F13--Atop--Btop};

\pic ["$x$"] {angle = F13--G13--D2};
\pic ["$y$"] {angle = G13--D2--F13};
\pic ["$z$"] {angle = D2--F13--G13};

\pic ["$x$"] {angle = Btop--Ctop--G13};
\pic ["$y$"] {angle = Ctop--G13--Btop};
\pic ["$z$"] {angle = G13--Btop--Ctop};

\pic ["$x$"] {angle = A13--G13--Ctop};
\pic ["$y$"] {angle = G13--Ctop--A13};
\pic ["$z$"] {angle = Ctop--A13--G13};

\pic ["$x$"] {angle = G13--A13--E2};
\pic ["$y$"] {angle = A13--E2--G13};
\pic ["$z$"] {angle = E2--G13--A13};

\pic ["$x$"] {angle = E2--D2--G13};
\pic ["$y$"] {angle = D2--G13--E2};
\pic ["$z$"] {angle = G13--E2--D2};

\pic ["$x$"] {angle = F2--E2--A13};
\pic ["$y$"] {angle = E2--A13--F2};
\pic ["$z$"] {angle = A13--F2--E2};

\pic ["$x$"] {angle = B10--A10--D2};
\pic ["$y$"] {angle = A10--D2--B10};
\pic ["$z$"] {angle = D2--B10--A10};

\pic ["$x$"] {angle = C10--B10--E2};
\pic ["$y$"] {angle = B10--E2--C10};
\pic ["$z$"] {angle = E2--C10--B10};

\pic ["$x$"] {angle = D10--C10--F2};
\pic ["$y$"] {angle = C10--F2--D10};
\pic ["$z$"] {angle = F2--D10--C10};

\pic ["$x$"] {angle = D2--E2--B10};
\pic ["$y$"] {angle = E2--B10--D2};
\pic ["$z$"] {angle = B10--D2--E2};

\pic ["$x$"] {angle = E2--F2--C10};
\pic ["$y$"] {angle = F2--C10--E2};
\pic ["$z$"] {angle = C10--E2--F2};

\pic ["$x$"] {angle = C10--D10--Abot};
\pic ["$y$"] {angle = D10--Abot--C10};
\pic ["$z$"] {angle = Abot--C10--D10};

\end{tikzpicture}  
  \caption{Labelling of the internal angles of $\mathbb{T}^2_s$.}\label{fig3}
\end{figure}
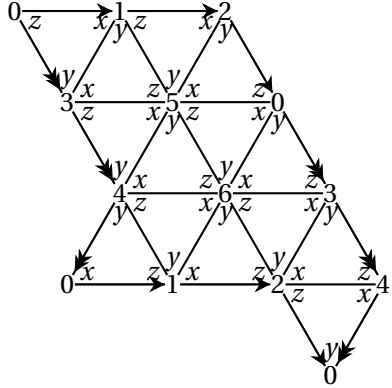
This labelling can be used to define a linear function on $\k L_2^s||N_2^s$ as follows: it sends an element of $L_2^s||N_2^s$ to $1$ if the corresponding angle has the label $a$, to $q$ if the corresponding angle has the label $b$, and to $q^2$ if the corresponding angle has the label $c$. This linear function is readily seen to vanishing on the image of the differential, hence $HH^2(I_q(\mathbb{T}^2_s),I_q(\mathbb{T}^2_s))\ne 0$, as required. An Euler characteristic computation immediately proves that in this case $\dim HH^1(I_q(\mathbb{T}^2_s),I_q(\mathbb{T}^2_s))=2$. The verification that the cup product is the same as that of the torus is a bit more tedious and is omitted. 
\end{proof}

Our result shows that, while each of the incidence algebras $I(\mathbb{T}^2_s)$ and $I(\mathbb{T}^2_c)$ is, in a sense, an algebraic model of a torus, their deformations capture some intricate differences between the simplicial and the cubical realizations. In the following section, we shall see an even more suprising phenomenon of a similar flavour.

\section{Two derived versions of \texorpdfstring{$\mathbb{P}^1\times\mathbb{P}^1$}{P1P1}}\label{sec:exceptional}

In this section, we examine two examples of nonzero cup products arising from two different derived versions for $\mathbb{P}^1\times\mathbb{P}^1$, by which we mean finite-dimen\-sional algebras $\Lambda$ for which we have an equivalence of derived categories
\begin{equation}\label{eq:derived}
D^b(\Mod(\Lambda))\simeq D^b(\text{coh}(\mathbb{P}^1\times \mathbb{P}^1)).   
\end{equation}
Work of Lowen and Van den Bergh on Hochschild cohomology of abelian categories \cite{MR2183254}, together with the Hochschild--Kostant--Rosenberg theorem for Hoch\-schild cohomology of smooth schemes \cite[Cor.~0.6]{MR1940241}, implies that for such algebra over a field $\k$ of zero characteristic, we have a Gerstenhaber algebra isomorphism 
 \[
HH^\bullet(\Lambda,\Lambda)\cong H^\bullet(\mathbb{P}^1\times \mathbb{P}^1, \wedge^\bullet \mathrm{T}_{\mathbb{P}^1\times \mathbb{P}^1})\cong  
H^\bullet(\mathbb{P}^1, \wedge^\bullet \mathrm{T}_{\mathbb{P}^1})^{\otimes 2}.  
 \] 
Since we have 
\begin{gather*}
H^\bullet(\mathbb{P}^1, \wedge^\bullet \mathrm{T}_{\mathbb{P}^1})\cong \k 1,\\
H^\bullet(\mathbb{P}^1, \wedge^\bullet \mathrm{T}_{\mathbb{P}^1})\cong\Lie(\Aut(\mathbb{P}^1)) \cong \mathfrak{sl}_2,
\end{gather*}
it follows that 
\begin{gather*}
H^0(\mathbb{P}^1\times \mathbb{P}^1, \wedge^\bullet \mathrm{T}_{\mathbb{P}^1\times \mathbb{P}^1})\cong \k 1,\\
H^1(\mathbb{P}^1\times \mathbb{P}^1, \wedge^\bullet \mathrm{T}_{\mathbb{P}^1\times \mathbb{P}^1})\cong \mathfrak{sl}_2\oplus \mathfrak{sl}_2,\\
H^2(\mathbb{P}^1\times \mathbb{P}^1, \wedge^\bullet \mathrm{T}_{\mathbb{P}^1\times \mathbb{P}^1})\cong \mathfrak{sl}_2\otimes \mathfrak{sl}_2,     
\end{gather*}
where these isomorphisms are compatible with the Lie bracket on $HH^1$, the adjoint action of $HH^1$ on $HH^2$, and the cup product: the cup product of two different copies of $\mathfrak{sl}_2$ in $HH^1$ is identified with $\mathfrak{sl}_2\otimes \mathfrak{sl}_2\cong HH^2$. In particular, the Hochschild cohomology of such an algebra always has a nonzero cup product. As alluded to in the introduction, general methods to produce derived equivalences like \eqref{eq:derived} give triangular algebras.

\subsection{Derived \texorpdfstring{$\mathbb{P}^1\times\mathbb{P}^1$}{P1P1} of Van den Bergh}\label{sec:vdB}

The example in this section is of geometric nature, and goes back to Van den Bergh \cite[Ex.~4.1]{MR2836401}, see also \cite[Prop.~19]{MR3988086}. 

Let $Q^{(4)}$ be the quiver 
 \[
\begin{tikzcd}
  1 & 2 & 3 & 4
  \arrow["{y_0}"', shift right, from=1-1, to=1-2]
  \arrow["{x_0}", shift left, from=1-1, to=1-2]
  \arrow["{y_1}"', shift right, from=1-2, to=1-3]
  \arrow["{x_1}", shift left, from=1-2, to=1-3]
  \arrow["{y_2}"', shift right, from=1-3, to=1-4]
  \arrow["{x_2}", shift left, from=1-3, to=1-4]
\end{tikzcd}
 \]
We consider the algebra $\Pi$ that is the quotient of the path algebra of $Q^{(4)}$ by the relations 
\begin{gather*}
y_2x_1x_0=x_2x_1y_0,\\
x_2y_1y_0=y_2y_1x_0.
\end{gather*}

It is known \cite[Prop.~19]{MR3988086} that the derived category of $\Pi$-modules is equivalent to $D^b(\text{coh}(\mathbb{P}^1\times \mathbb{P}^1))$. The discussion above immediately implies the following result. 

\begin{proposition}
We have an isomorphism of Gerstenhaber algebras
 \[
HH^\bullet(\Pi,\Pi)\cong H^\bullet(\mathbb{P}^1, \wedge^\bullet \mathrm{T}_{\mathbb{P}^1})^{\otimes 2}.     
 \]
In particular, the cup product of $HH^\bullet(\Pi,\Pi)$ is nonzero. 
\end{proposition}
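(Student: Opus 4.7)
The proof is a direct application of the general framework already assembled in the preamble of this section. That framework shows that for \emph{any} finite-dimensional algebra $\Lambda$ satisfying the derived equivalence \eqref{eq:derived}, combining derived invariance of the Gerstenhaber algebra $HH^\bullet$ \cite{MR2043327,MR1099084}, the Lowen--van den Bergh identification \cite{MR2183254} of $HH^\bullet$ of an abelian category with that of the underlying smooth scheme, the HKR isomorphism \cite[Cor.~0.6]{MR1940241}, and the Künneth formula, one obtains $HH^\bullet(\Lambda,\Lambda)\cong H^\bullet(\mathbb{P}^1,\wedge^\bullet \mathrm{T}_{\mathbb{P}^1})^{\otimes 2}$ as Gerstenhaber algebras. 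The plan is thus simply to verify the hypothesis \eqref{eq:derived} for $\Lambda=\Pi$, which is exactly the content of \cite[Prop.~19]{MR3988086}: there an equivalence $D^b(\Mod(\Pi))\simeq D^b(\mathrm{coh}(\mathbb{P}^1\times\mathbb{P}^1))$ is produced from a full strong exceptional collection on $\mathbb{P}^1\times\mathbb{P}^1$ whose total endomorphism algebra is $\Pi$.

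For the ``in particular'' claim, it suffices to combine $\mathrm{T}_{\mathbb{P}^1}\cong \mathcal{O}(2)$ with the standard cohomology of line bundles on $\mathbb{P}^1$, which yields $H^0(\mathbb{P}^1,\mathrm{T}_{\mathbb{P}^1})\cong \mathfrak{sl}_2$ and $H^1(\mathbb{P}^1,\mathrm{T}_{\mathbb{P}^1})=0$. Under the Künneth decomposition recalled above, the cup product of elements from the two distinct $\mathfrak{sl}_2$-summands of $HH^1(\Pi,\Pi)\cong \mathfrak{sl}_2\oplus\mathfrak{sl}_2$ then realizes the canonical isomorphism $\mathfrak{sl}_2\otimes\mathfrak{sl}_2 \xrightarrow{\sim} HH^2(\Pi,\Pi)$ and is therefore manifestly nonzero.

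The main subtlety — which is however entirely absorbed into the cited results — is that each of the three identifications (derived invariance, Lowen--van den Bergh, and HKR) has to be compatible with the full Gerstenhaber structure and not merely with the underlying graded vector space; in characteristic zero, this compatibility ultimately rests on Kontsevich formality and its refinements for smooth schemes, exactly as used in \cite{MR1940241}. Fortunately, for the nonvanishing of the cup product all that is needed is compatibility with the associative product, which is elementary: so no further computation is required beyond invoking the reference \cite{MR3988086}.
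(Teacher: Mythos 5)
Your proposal is correct and follows exactly the paper's own route: the proof there consists of quoting \cite[Prop.~19]{MR3988086} for the derived equivalence $D^b(\Mod(\Pi))\simeq D^b(\mathrm{coh}(\mathbb{P}^1\times\mathbb{P}^1))$ and then invoking the general discussion at the start of the section (derived invariance of the Gerstenhaber structure, Lowen--van den Bergh, HKR, and the K\"unneth identification of the cup product $HH^1\cup HH^1$ with $\mathfrak{sl}_2\otimes\mathfrak{sl}_2\cong HH^2$). Your added remarks on $\mathrm{T}_{\mathbb{P}^1}\cong\mathcal{O}(2)$ and on compatibility of the identifications with the Gerstenhaber structure are consistent with what the paper already assumes in its preamble, so nothing is missing.
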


\subsection{Product of two derived \texorpdfstring{$\mathbb{P}^1$}{P1}}\label{sec:prodP1}

Throughout this section, we denote by $\Lambda$ to be the path algebra of the Kronecker quiver $Q=\begin{tikzcd}
    1 & 2
    \arrow[shift right, from=1-1, to=1-2]
    \arrow[from=1-1, to=1-2]
\end{tikzcd}$. To avail of a geometric viewpoint from time to time, we shall assume that $\mathrm{char}(\k)=0$, though many of our results can be proved by purely algebraic methods under the weaker assumption $\mathrm{char}(\k)\ne 2$.

It is a foundational result \cite{MR509388} that the derived category of $\Lambda$-modules is equivalent to $D^b(\text{coh}(\mathbb{P}^1))$. A straightforward consequence of this result is that the derived category of $\Lambda\otimes\Lambda$-modules is equivalent to $D^b(\text{coh}(\mathbb{P}^1\times\mathbb{P}^1))$, implying the following result. 

\begin{proposition}\label{prop:GerstP1P1}
We have an isomorphism of Gerstenhaber algebras
 \[
HH^\bullet(\Lambda\otimes\Lambda,\Lambda\otimes\Lambda)\cong H^\bullet(\mathbb{P}^1, \wedge^\bullet \mathrm{T}_{\mathbb{P}^1})^{\otimes 2}.     
 \]
In particular, the cup product of $HH^\bullet(\Lambda\otimes\Lambda,\Lambda\otimes\Lambda)$ is nonzero. 
\end{proposition}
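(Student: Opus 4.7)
The plan is to derive the statement as a direct application of the general framework laid out at the beginning of this section, once the derived equivalence $D^b(\Mod(\Lambda\otimes\Lambda))\simeq D^b(\text{coh}(\mathbb{P}^1\times\mathbb{P}^1))$ is established. Once such an equivalence is in place, the discussion preceding the statement gives the Gerstenhaber algebra isomorphism essentially for free.

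First I would recall that the Beilinson equivalence for the Kronecker quiver is realised by the tilting bundle $T=\mathcal{O}_{\mathbb{P}^1}\oplus\mathcal{O}_{\mathbb{P}^1}(1)$ on $\mathbb{P}^1$, whose endomorphism algebra equals $\Lambda$ and whose higher self-$\Ext$ groups vanish. Next I would check that the external tensor product $T\boxtimes T$ is a tilting bundle on $\mathbb{P}^1\times\mathbb{P}^1$: its four summands $\mathcal{O}(i)\boxtimes\mathcal{O}(j)$ with $i,j\in\{0,1\}$ form a full strong exceptional collection, and the Künneth formula for $\Ext$ of external tensor products on a product of smooth projective varieties simultaneously gives vanishing of higher self-extensions and the identification $\End(T\boxtimes T)\cong\Lambda\otimes\Lambda$. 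This yields the required derived equivalence.

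Having the equivalence, I would invoke the package of tools quoted at the opening of the section---derived invariance of Hochschild cohomology as a Gerstenhaber algebra, the Lowen--van den Bergh comparison with the Hochschild cohomology of $\text{coh}(\mathbb{P}^1\times\mathbb{P}^1)$, and the Hochschild--Kostant--Rosenberg isomorphism in characteristic zero---to obtain
\[
HH^\bullet(\Lambda\otimes\Lambda,\Lambda\otimes\Lambda)\cong H^\bullet(\mathbb{P}^1\times\mathbb{P}^1,\wedge^\bullet \mathrm{T}_{\mathbb{P}^1\times\mathbb{P}^1}),
\]
and Künneth for sheaf cohomology identifies the right-hand side with $H^\bullet(\mathbb{P}^1,\wedge^\bullet \mathrm{T}_{\mathbb{P}^1})^{\otimes 2}$. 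The only mildly non-formal step is the tilting check for $T\boxtimes T$, which is entirely standard for products of smooth projective varieties; everything else is bookkeeping based on the general principles already stated in the section. In particular, the nonvanishing of the cup product is automatic from the explicit description of the target recalled in the section introduction.
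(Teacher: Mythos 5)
Your proposal is correct and follows essentially the same route as the paper: the paper cites Beilinson's equivalence for the Kronecker quiver, notes that the equivalence $D^b(\Mod(\Lambda\otimes\Lambda))\simeq D^b(\text{coh}(\mathbb{P}^1\times\mathbb{P}^1))$ is a straightforward consequence (which you make explicit via the tilting bundle $T\boxtimes T$ and Künneth), and then deduces the Gerstenhaber isomorphism and the nonvanishing cup product from the general discussion at the start of the section, exactly as you do.
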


Let us remark that Green and Solberg \cite{MR2067380} indicate that tensor products of algebras may be used to provide examples of algebras with nonzero cup products of Hochschild cohomology, so the example of $\Lambda\otimes\Lambda$ is not at all new. However, it turns out that there is an interesting aspect of this example that does not seem to have been studied: deformations of the algebra $\Lambda\otimes\Lambda$. 

In the next section, we shall examine Hochschild cohomology of those deformations, and for that we need to fix some notations and identifications that will prove useful below. The quiver $Q\times Q$ of the algebra $\Lambda\otimes\Lambda$
has vertices $Q_0\times Q_0$, arrows $Q_1\times Q_0\sqcup Q_0\times Q_1$, and paths of length two 
$(Q_1\times e_2)(e_1\times Q_1)\sqcup (e_2\times Q_1)(Q_1\times e_1)$; there are no paths of length three. The relations of the algebra $\Lambda\otimes\Lambda$ are of the form
\begin{equation}\label{eq:rel-undef}
(\alpha\times e_2)(e_1\times \beta)=(e_2\times \beta)(\alpha\times e_1)   
\end{equation}
for all possible choices of $\alpha,\beta\in Q_1$. As we mentioned above, we have  
 \[
HH^1(\Lambda,\Lambda)\cong H^\bullet(\mathbb{P}^1, \wedge^\bullet \mathrm{T}_{\mathbb{P}^1})\cong \mathfrak{sl}_2,  
 \]
and it is easy to see from independent algebraic computations \cite{MR4102135} that under this identification $\mathfrak{sl}_2$ acts on $\k Q_1$ according to the standard two-dimensional representation $L(1)$. Thus, the relations \eqref{eq:rel-undef} are parametrized by $L(1)\otimes L(1)$; we shall designate the path $(\alpha\times e_2)(e_1\times \beta)$ to be the leading term of the relation, and the path $(e_2\times \beta)(\alpha\times e_1)$ to be the irreducible path of the relation. In particular, both the space of the reducible paths of length $2$ and the space of irreducible paths of length $2$ for the algebra $\Lambda\otimes\Lambda$ are identified with $L(1)\otimes L(1)$; for each $u\in L(1)\otimes L(1)$, we shall denote by $\overline{u}$ the corresponding reducible path and by $u$ the corresponding irreducible path, so that the space of relations of $\Lambda\otimes\Lambda$ under this identification consists of all elements $\overline{u}-u$, $u\in L(1)\otimes L(1)$. 

\begin{proposition}\label{prop:lldef}
The algebra $\Lambda\otimes\Lambda$ admits, up to equivalence, a nine-parametric family of deformations $(\Lambda\otimes\Lambda)_\Psi$ depending on a parameter $\Psi\in\mathfrak{sl}_2\otimes\mathfrak{sl}_2$; these deformations are defined over $\k[\mathfrak{sl}_2\otimes\mathfrak{sl}_2]\subset\k[[\mathfrak{sl}_2\otimes\mathfrak{sl}_2]]$. Under the above identification, the deformation $(\Lambda\otimes\Lambda)_\Psi$ has the space of relations
 \[
\overline{u}=u+\Psi(u),\quad u\in L(1)\otimes L(1),   
 \]
where the action of $\Psi\in\mathfrak{sl}_2\otimes\mathfrak{sl}_2$ on $L(1)\otimes L(1)$ is the Kronecker tensor product action.
\end{proposition}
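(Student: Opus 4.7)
The plan is to reuse the strategy of the proof of Proposition~\ref{prop:HHincid}, adapted to the larger quiver of $\Lambda\otimes\Lambda$. The crucial structural observation is that every non-idempotent path in this quiver runs from $(1,1)$ to $(2,2)$ and has length exactly two, so there are no paths of length three. Consequently $\Lambda\otimes\Lambda$ is quadratic Koszul with its relations automatically forming a Gröbner basis, the Chouhy--Solotar and Koszul resolutions agree, and no Jacobi-type obstructions can appear for quadratic deformations. The Hochschild cohomology is thus computed by the three-term complex
$$0 \to \k Q_0||Q_0 \to \k Q_1||Q_1 \to \k L_2||N_2 \to 0,$$
and, using the identification stated in the proposition, the rightmost term becomes $\End(L(1)\otimes L(1))\cong \mathfrak{gl}_2\otimes\mathfrak{gl}_2$.

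The main calculation is the image of the rightmost differential. The middle term decomposes along the four edge-classes of the quiver (horizontal arrows in each of the two rows and vertical arrows in each of the two columns) as four copies of $\End(L(1))=\mathfrak{gl}_2$. Writing a 1-cochain accordingly as a tuple $\phi=(\phi_h^{(1)},\phi_h^{(2)},\phi_v^{(1)},\phi_v^{(2)})$ and applying the Leibniz rule to the undeformed relation $\overline{\alpha\otimes\beta}-\alpha\otimes\beta$, I expect to find, modulo relations,
$$\partial\phi \;=\; (\phi_h^{(2)}-\phi_h^{(1)})\otimes 1 \;+\; 1\otimes(\phi_v^{(1)}-\phi_v^{(2)}) \;\in\; \mathfrak{gl}_2\otimes\mathfrak{gl}_2.$$
Hence the image of $\partial$ is exactly the seven-dimensional subspace $\mathfrak{gl}_2\otimes 1 + 1\otimes\mathfrak{gl}_2$, the cokernel is canonically $\mathfrak{sl}_2\otimes\mathfrak{sl}_2$ (matching the dimension predicted by the derived equivalence), and the inclusion $\mathfrak{sl}_2\otimes\mathfrak{sl}_2\hookrightarrow \mathfrak{gl}_2\otimes\mathfrak{gl}_2$ furnishes a canonical system of cocycle representatives.

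Finally, each $\Psi\in\mathfrak{sl}_2\otimes\mathfrak{sl}_2$ assembled this way gives, by the very definition of the 2-cocycle as a correction to the relations, the deformed relations $\overline{u}-u-\Psi(u)=0$ as stated. Flatness is automatic from the absence of length-three paths (equivalently, from $HH^3=0$, which ensures unobstructedness and precludes any higher-order Maurer--Cartan corrections), and distinct $\Psi$'s yield pairwise non-equivalent deformations because they represent distinct classes in $HH^2$. The main technical obstacle I anticipate is the careful bookkeeping in the coboundary formula---in particular, tracking which of the four edge-classes each component of $\phi$ acts on, and handling the signs arising from the horizontal-then-vertical versus vertical-then-horizontal factorisation of the two composable length-two paths.
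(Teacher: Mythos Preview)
Your argument is correct, modulo a small misstatement: it is not true that every non-idempotent path runs from $(1,1)$ to $(2,2)$ and has length two---the eight arrows themselves are length-one paths between adjacent vertices. What you need, and what is true, is that no path has length three; this is what makes the relations a Gr\"obner basis and kills all obstructions. Your coboundary computation is right: with the four $\mathfrak{gl}_2$-components labelled as you indicate, one finds $\partial\phi=(\phi_h^{(2)}-\phi_h^{(1)})\otimes 1+1\otimes(\phi_v^{(1)}-\phi_v^{(2)})$, so the image is exactly $\mathfrak{gl}_2\otimes 1+1\otimes\mathfrak{gl}_2$ and the cokernel is $\mathfrak{sl}_2\otimes\mathfrak{sl}_2$, with the natural inclusion into $\End(L(1)\otimes L(1))$ giving the Kronecker action.

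The paper takes a different route to the same conclusion. Rather than computing the coboundary image directly, it invokes the theorem of Le and Zhou that $HH^\bullet(\Lambda_1\otimes\Lambda_2)\cong HH^\bullet(\Lambda_1)\otimes HH^\bullet(\Lambda_2)$ as Gerstenhaber algebras, so that $HH^2(\Lambda\otimes\Lambda)$ is generated by cup products $X\cup Y$ with $X,Y\in HH^1(\Lambda)\cong\mathfrak{sl}_2$. It then evaluates such a cup product on a leading term $(\alpha\times e_2)(e_1\times\beta)$ to obtain $X(\alpha)\otimes Y(\beta)$, which is the Kronecker action. Your approach is more elementary and entirely self-contained; the paper's is more structural and explains \emph{why} the Kronecker action appears---it is literally the cup product of the two $\mathfrak{sl}_2$-actions on the factors, which also foreshadows the role the cup product plays in the surrounding discussion.
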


\begin{proof}
First of all, it follows from Proposition \ref{prop:GerstP1P1} that 
 \[
HH^2(\Lambda\otimes\Lambda,\Lambda\otimes\Lambda)\cong\mathfrak{sl}_2\otimes\mathfrak{sl}_2,   
 \]
and therefore equivalence classes of infinitesimal deformations have $\mathfrak{sl}_2\otimes\mathfrak{sl}_2$ as the space of parameters. To describe these infinitesimal deformations, we repeat the strategy of Proposition \ref{prop:HHincid} and look at the Koszul complex of the algebra. It follows that the deformation corresponding to $\Psi\in\mathfrak{sl}_2\otimes\mathfrak{sl}_2$ adds to a relation of our algebra the image of the leading term of that relation under the action of $\Psi$. To perform this computation explicitly, it is convenient to take a purely algebraic viewpoint, and to use the known Gerstenhaber structrure on $HH^\bullet(\Lambda,\Lambda)$ \cite{MR4102135} and the result of Le and Zhou \cite{MR3175033}, who prove that for two associative algebras $\Lambda_1$ and $\Lambda_2$, we have a Gerstenhaber algebra isomorphism 
 \[
HH^\bullet(\Lambda_1\otimes \Lambda_2,\Lambda_1\otimes \Lambda_2)\cong HH^\bullet(\Lambda_1,\Lambda_1)\otimes HH^\bullet(\Lambda_2,\Lambda_2),     
 \] 
whenever at least one of the two algebras is finite-dimensional. Here the right hand side is equipped with the usual Gerstenhaber algebra structure coming from the Hopf operad structure of the Gerstenhaber operad \cite{MR2954392}. Since the identification
 \[
HH^1(\Lambda,\Lambda) \oplus  HH^1(\Lambda,\Lambda) \cong HH^1(\Lambda\otimes\Lambda,\Lambda\otimes\Lambda)     
 \]
sends $(X,Y)$ to $(X\otimes 1, 1\otimes Y)$ in  
 \[
HH^1(\Lambda,\Lambda) \otimes HH^0(\Lambda,\Lambda) \oplus  HH^0(\Lambda,\Lambda) \otimes HH^1(\Lambda,\Lambda)\cong HH^1(\Lambda\otimes\Lambda,\Lambda\otimes\Lambda),   
 \]
it follows that for $(X,Y)\in \mathfrak{sl}_2\oplus \mathfrak{sl}_2\cong  HH^1(\Lambda\otimes\Lambda,\Lambda\otimes\Lambda)$, and $a_1,a_2,b_1,b_2\in\Lambda$ we have $(X\cup Y)(a_1\otimes b_1,a_2\otimes b_2)=X(a_1)a_2 \otimes b_1Y(b_2)$.    
Since the leading terms of our relations are $(\alpha\times e_2)(e_1\times \beta)$, we have, for cup-decomposable products   
 \[
(X\cup Y)(\alpha\times e_2)(e_1\times \beta)=X(\alpha) \otimes Y(\beta),    
 \]
which is precisely the Kronecker tensor product action. 

To conclude, we also argue as in Proposition \ref{prop:HHincid}. The quiver  $Q\times Q$ has no path of length three, so our infinitesimal deformation is defined over $\k[\mathfrak{sl}_2\otimes\mathfrak{sl}_2]\subset\k[[\mathfrak{sl}_2\otimes\mathfrak{sl}_2]]$, and not just over $\k[\mathfrak{sl}_2\otimes\mathfrak{sl}_2]/(\mathfrak{sl}_2\otimes\mathfrak{sl}_2)^2$, as an infinitesimal deformation usually would be. Indeed, our relations take care of products of two arrows, and polynomials in $\mathfrak{sl}_2\otimes\mathfrak{sl}_2$ that vanish twice at the origin would only appear when computing products of at least three arrows. This completes the proof.  
\end{proof}

\subsection{Hochschild cohomology of deformations of derived \texorpdfstring{$\mathbb{P}^1\times\mathbb{P}^1$}{P1P1}}\label{sec:defP1P1}

Deformation questions closely related to the example of Section \ref{sec:vdB} were studied by Belmans in~\cite{MR3988086}; he in particular shows that all nontrivial deformations have smaller Hoch\-schild cohomology. We shall now discuss Hochschild cohomology of deformations $(\Lambda\otimes\Lambda)_\Psi$ of the example of Section \ref{sec:prodP1}. Some of these deformations are twisted tensor products and so the methods of \cite{MR4418319} apply; however, this only covers some particular cases, and we use a different approach.  

To understand how the answer depends on $\Psi\in \mathfrak{sl}_2\otimes\mathfrak{sl}_2$, we note that it clearly only depends on the orbit of $\Psi$ under the adjoint action of $SL_2\times SL_2$ (arising from the change of basis in the space of generators of each of the two factors $\Lambda$). If we use the suitably normalized Killing form 
 \[
\mathsf{k}\colon\mathfrak{sl}_2\times\mathfrak{sl}_2\to\k, \quad \mathsf{k}(a,b):=\tr(ab),  
 \]
to make an identification $\mathfrak{sl}_2^*\cong \mathfrak{sl}_2$, we may think of $\Psi$ as an element of $\End(\mathfrak{sl}_2)$, and the former action translates into the action of the group $SO(\mathfrak{sl}_2,\mathsf{k})\times SO(\mathfrak{sl}_2,\mathsf{k})$ on $\End(\mathfrak{sl}_2)$ given by $(P,Q)(\Psi)=P^\dagger\Psi Q$, where $P^\dagger$ means the adjoint of $P$ with respect to $\mathsf{k}$. Thus, the study of orbits is a version of the ``singular value decomposition'' \cite[Sec.~7.3]{MR1084815} for the Killing form. 

For a vector space $V$ equipped with a quadratic form $\mathsf{q}$, the usual singular value decomposition theorem obtains all the necessary information for classifying the orbits of $SO(V,\mathsf{q})\times SO(V,\mathsf{q})$ on linear transformations $\Psi\in\End(V)$ from the invariants of the conjugation action of $SO(V,\mathsf{q})$ on the self-adjoint transformations $\Psi^\dagger\Psi$. The following result shows the special role that the transformation $\Psi^\dagger\Psi$ plays in our case.

\begin{theorem}\label{th:psidef}
For each $\Psi\in\End(\mathfrak{sl}_2)$, we have an isomorphism of vector spaces 
 \[
HH^1((\Lambda\otimes\Lambda)_\Psi,(\Lambda\otimes\Lambda)_\Psi)\cong \mathrm{stab}(\Psi)\oplus \mathfrak{J}_\Psi,   
 \]
where $\mathrm{stab}(\Psi)$ is the Lie algebra of stabilizer of $\Psi$ in $SO(\mathfrak{sl}_2,\mathsf{k})\times SO(\mathfrak{sl}_2,\mathsf{k})$, and 
 \[
\mathfrak{J}_\Psi:=\{x\in \mathfrak{sl}_2\colon \Psi^\dagger\Psi(x)=4x\}.   
 \]
\end{theorem} 

The first Hochschild cohomology describes infinitesimal symmetries; in a sense, the first summand describes the obvious symmetries arising from the symmetries of the direction of deformation $\Psi$, while the second one corresponds to somewhat unexpected symmetries. Note, however, that the second summand is never present for $\Psi$ sufficiently ``small''; in a sense, this is a phenomenon that is completely analogous to the roots of unity appearing in Section \ref{sec:defS1S1}. 

\begin{proof}
We use the same strategy as in Proposition \ref{prop:HHincid}; namely, we shall compute the Hochschild cohomology $HH^\bullet((\Lambda\otimes\Lambda)_\Psi,(\Lambda\otimes\Lambda)_\Psi)$ via the Koszul complex. Under the identifications made above, that complex is 
 \[
0\to \k Q_0\times Q_0||Q_0\times Q_0 \to \k (Q_1\times Q_0\sqcup Q_0\times Q_1)||(Q_1\times Q_0\sqcup Q_0\times Q_1)\to \k L_2||N_2\to 0   , 
 \]
where $L_2$ is the set of leading terms of relations, and $N_2$ is the set of irreducible paths of length two. Examining the possible parallel paths of length $1$ makes it clear that we may further identify 
 \[
\k (Q_1\times Q_0\sqcup Q_0\times Q_1)||(Q_1\times Q_0\sqcup Q_0\times Q_1)\cong\End(L(1))\otimes \k Q_0\oplus \k Q_0\otimes \End(L(1)).    
 \]
Additionally, identifications made to state Proposition \ref{prop:lldef} suggest to identify 
 \[
\k L_2||N_2\cong \End(L(1)\otimes L(1)).   
 \]
Under all these identifications, we easily compute that the rightmost differential of that complex is given by
\begin{gather*}
\partial(f\otimes e_1)=-(1\otimes f)(1+\Psi),\\
\partial(f\otimes e_2)=(1+\Psi)(f\otimes 1),\\
\partial(e_1\otimes f)=(1+\Psi)(1\otimes f),\\
\partial(e_2\otimes f)=-(f\otimes 1)(1+\Psi),
\end{gather*}
which means that the kernel of $\partial$ consists of quadruples 
 $(f_1,f_2,f_3,f_4)\in \End(L(1))^4$ for which
\begin{equation}\label{eq:kernel}
(1+\Psi)(f_2\otimes 1+1\otimes f_3)=(f_4\otimes 1+1\otimes f_1)(1+\Psi).   
\end{equation}
Additionally, the image of the leftmost differential under this identification is spanned by the elements $1\otimes e_1+e_1\otimes 1$, $1\otimes e_2+e_2\otimes 1$, and $1\otimes e_1-e_2\otimes 1$, so modulo the image of the differential we may consider the same Equation \eqref{eq:kernel} for 
 \[
(f_1,f_2,f_3,f_4)\in \k (1,0,0,0)\oplus \mathfrak{sl}_2^{\oplus 4}\subset \End(L(1))^4.   
 \] 
Note that Equation \eqref{eq:kernel} is written in $\End(L(1)\otimes L(1))$ which decomposes under the action of $\mathfrak{sl}_2\oplus\mathfrak{sl_2}$ 
 \[
L(0)\otimes L(0)\oplus L(0)\otimes L(2)\oplus L(2)\otimes L(0)\oplus L(2)\otimes L(2).   
 \]
To split \eqref{eq:kernel} according to this decomposition, we shall use a simple result which is an easy consequence of the Cayley--Hamilton theorem: for every two matrices $a,b\in\mathfrak{sl}_2$, we have $ab+ba=\tr(ab)1=\mathsf{k}(a,b)1$. Using this fact, we note that if change variables and write 
 \[
(f_1,f_2,f_3,f_4)=c(1,0,0,0)+(u_1-v_1,u_2-v_2,u_1+v_1,u_2+v_2)   
 \]
with $u_1,u_2,v_1,v_2\in\mathfrak{sl}_2$, Equation \eqref{eq:kernel} is equivalent to the system of equations
 \[
\begin{cases}
c=0,\\
[u_2\otimes 1+1\otimes u_1,\Psi]=0,\\
v_2\dashv\Psi-2v_1=0,\\
\Psi\vdash v_1-2v_2=0.
\end{cases}   
 \] 
The first equation here is self-explanatory, the second one describes the stabilizer of $\Psi$ in $\mathfrak{sl}_2\oplus \mathfrak{sl}_2$, and the last two equations use the maps  
 \[
\dashv\colon \mathfrak{sl}_2\otimes(\mathfrak{sl}_2\otimes\mathfrak{sl}_2)\to\mathfrak{sl}_2\quad  \text{ and  }\quad 
\vdash\colon (\mathfrak{sl}_2\otimes\mathfrak{sl}_2)\otimes\mathfrak{sl}_2\to\mathfrak{sl}_2,
 \]
which compute the Killing form between the alone factor $\mathfrak{sl}_2$ and one of the two factors in $\mathfrak{sl}_2\otimes\mathfrak{sl}_2$.

From the last two equations of the above system one obtains 
 \[
\Psi\vdash (v_2\dashv\Psi)=\Psi\vdash(2v_1)=4v_2.   
 \]
Conversely, if $\Psi\vdash (v_2\dashv\Psi)=4v_2$, then an element $v_1$ satisfying the above system can be uniquely reconstructed as $v_1:=\frac12 v_2\dashv\Psi$. Finally, a direct inspection shows that under our identification $\mathfrak{sl}_2\otimes\mathfrak{sl}_2\cong \End(\mathfrak{sl}_2)$, we have 
 \[
 \Psi\vdash (v_2\dashv\Psi)=\Psi^\dagger\Psi(v_2),  
 \]
which completes the proof. 
\end{proof}

In principle, the description of $HH^1((\Lambda\otimes\Lambda)_\Psi,(\Lambda\otimes\Lambda)_\Psi)$ in Theorem \ref{th:psidef} relies on completely classical ingredients. If $\k$ is algebraically closed, the classification of self-adjoint transformations under orthogonal similarity is known; for instance, it is spelled out explicitly for $\k=\mathbb{C}$ in \cite[Chapter XI.3]{MR107649}; according to \cite[Chapter 8]{MR2964027}, this essentially goes back to Weiler \cite{zbMATH02718341}. Orbit stabilizers for that action are also completely understood \cite{MR4578016}. Using all that information, one may obtain all sorts of results about the complex singular value decomposition, see~\cite{MR900070} for generic orbits and \cite[Th.~13]{8603813} for the general case. To compute
 \[
\dim HH^1((\Lambda\otimes\Lambda)_\Psi,(\Lambda\otimes\Lambda)_\Psi)=\dim\mathrm{stab}(\Psi)+\dim\mathfrak{J}_\Psi,   
 \]
one may pass to the algebraic closure of $\k$ and avail of the results of \cite{MR4578016}. One can also use a more brute force computational approach. For that, we computed directly the following matrices whose ranks determine $\dim\mathrm{stab}(\Psi)$ and $\dim\mathfrak{J}_\Psi$. 

\begin{proposition}\label{prop:twomatrices}
Let us consider an element $\Psi\in\mathfrak{sl}_2\otimes\mathfrak{sl}_2$ of the form
 \[
x_1 e\otimes e+x_2 e\otimes h+x_3e\otimes f+x_4 h\otimes e+x_5 h\otimes h+x_6 h\otimes f+x_7 f\otimes e+x_8f\otimes h+x_9f\otimes f.    
 \]
Then $\mathrm{stab}(\Psi)$ may be identified with the kernel of the matrix 
 \[
U_\Psi= 
\begin{pmatrix}
-x_4&x_1&0&-x_2&x_1&0\\
-x_5&x_2&0&\frac12x_3&0&-\frac12x_1\\
-x_6&x_3&0&0&-x_3&x_2\\
\frac12x_7&0&-\frac12x_1&-x_5&x_4&0\\
\frac12x_8&0&-\frac12x_2&\frac12x_6&0&-\frac12x_4\\
\frac12x_9&0&-\frac12x_3&0&-x_6&x_5\\
0&-x_7&x_4&-x_8&x_7&0\\
0&-x_8&x_5&\frac12x_9&0&-\frac12x_7\\
0&-x_9&x_6&0&-x_9&x_8 
\end{pmatrix},
 \]
and $\mathfrak{J}_\Psi$ may be identified with the kernel of the matrix 
 \[
V_\Psi= 
\begin{pmatrix} 
x_1x_9 + x_3x_7 + 2x_4x_6-4&   2x_1x_8 + 2x_2x_7 + 4x_4x_5&   2x_1x_7 + 2x_4^2\\
x_2x_9 + x_3x_8 + 2x_5x_6&   4x_2x_8 + 4x_5^2-4&   x_1x_8 + x_2x_7 + 2x_4x_5\\
2x_3x_9 + 2x_6^2&   2x_2x_9 + 2x_3x_8 + 4x_5x_6&   x_1x_9 + x_3x_7 + 2x_4x_6-4   
\end{pmatrix}.
 \]
\end{proposition}

The ranks of these matrices can be studied by applying Gröbner bases to the corresponding determinantal ideals~\cite{MR2375719}; we did that using \texttt{Magma} \cite{MR1484478} to double check various claims made below. Let us remark that the first of those matrices is, up to minor corrections accounting to a different choice of basis, the matrix $A_\pi$ from \cite[Th.~3.6]{MR2802550} associated to a given holomorphic bivector field $\pi$ on $\mathbb{P}^1\times\mathbb{P}^1$; this matrix is instrumental in computing the Poisson cohomology of $(\mathbb{P}^1\times\mathbb{P}^1,\pi)$. Thus, the Poisson cohomology corresponds to the ``obvious infinitesimal symmetries'' alluded to after the statement of Theorem \ref{th:psidef}.\\

Note that from our description of the vector space $HH^1((\Lambda\otimes\Lambda)_\Psi,(\Lambda\otimes\Lambda)_\Psi)$, one can also immediately compute the dimension of the second Hochschild cohomology $HH^2((\Lambda\otimes\Lambda)_\Psi,(\Lambda\otimes\Lambda)_\Psi)$. Indeed, the Euler characteristic of the Koszul complex of $(\Lambda\otimes\Lambda)_\Psi$ is $4-16+16=4$, and that complex vanishes in degrees greater than two, which implies that  
 \[
\dim HH^2((\Lambda\otimes\Lambda)_\Psi,(\Lambda\otimes\Lambda)_\Psi)=\dim HH^1((\Lambda\otimes\Lambda)_\Psi,(\Lambda\otimes\Lambda)_\Psi)+3.
 \]
Moreover, not all possible combinations of dimensions are possible; at a first glance, the situation is similar to that summarized in \cite[Table 2]{MR3988086} for the example from Section \ref{sec:vdB}: the triple 
 \[
\dim HH^\bullet:=(\dim HH^0,\dim HH^1,\dim HH^2)   
 \]
could only have the values 
 \[
(1,0,3), (1,1,4), (1,2,5), (1,3,6), (1,6,9).   
 \]
Specifically, the following table with rows corresponding to possible values of $\dim(\mathrm{stab}(\Psi))$ and columns corresponding to possible values of $\dim\mathfrak{J}_\Psi$ displays all feasible combinations of those dimensions.
 \[
\begin{tabular}{|c|c|c|c|c|}
\hline
&3&2&1&0\\
\hline
6&-&-&-&\checkmark \\
\hline
5&-&-&-&-\\
\hline
4&-&-&-&-\\
\hline
3&\checkmark&-&-&\checkmark\\
\hline
2&-&-&\checkmark&\checkmark\\
\hline
1&-&\checkmark&\checkmark&\checkmark\\
\hline
0&-&-&\checkmark&\checkmark\\
\hline
\end{tabular} 
 \]
In particular, one finds that $\dim HH^1=0$ in the generic cases of semi-simple $\Psi^\dagger\Psi$ with distinct eigenvalues all different from $4$. However, at a closer look one observes that, while in \cite[Table 2]{MR3988086} only the trivial deformations have the same dimensions of the Hochschild cohomology as the undeformed algebra, in our case we also have $\dim HH^1=6$ in the situation where the multiplicity of the eigenvalue $4$ of $\Psi^\dagger\Psi$ is equal to three, that is $\Psi^\dagger\Psi=4$. This follows from an unexpected property of the matrices $U_\Psi$ and $V_\Psi$ from Proposition \ref{prop:twomatrices}: the values of $\Psi$ for which the matrix $U_\Psi$ has rank three are exactly the same for which the matrix $V_\Psi$ is the zero matrix; in fact, parameters for which this happens form a three-dimensional subvariety in the nine-dimensional space of parameters. 

It is indicated in \cite[Sec.~3.2]{MR3988086} that computations for the deformed algebras suggest that the cup product on the Hochschild cohomology is often zero on elements in positive degrees. We observed a similar phenomenon when we computed the cup product using \texttt{QPA} \cite{qpa} for some randomly chosen deformations. Here are some examples we looked at:
 \[
\begin{array}{|c|c|c|c|}
\hline
\Psi&\dim\mathrm{stab}(\Psi)&\dim\mathfrak{J}_\Psi&\dim HH^\bullet\\
\hline
2e\otimes e+2f\otimes f+h\otimes h&3&3&(1,6,9)\\
\hline
e\otimes e&3&0&(1,3,6)\\
\hline
(e+h)\otimes(e+h)&2&1&(1,3,6)\\
\hline
e\otimes e+h\otimes h+2e\otimes f+2f\otimes e&1&2&(1,3,6)\\
\hline
(e+h+f)\otimes(e+h+f)&2&0&(1,2,5)\\
\hline
e\otimes e+f\otimes f+h\otimes h&1&1&(1,2,5)\\
\hline
e\otimes e+f\otimes f&1&0&(1,1,4)\\
\hline
e\otimes e+h\otimes h+f\otimes f+2e\otimes f+2f\otimes e&0&1&(1,1,4)\\
\hline
e\otimes e+2h\otimes h+e\otimes f+f\otimes e&0&0&(1,0,3)\\
\hline
\end{array}
 \] 
In almost all of these cases, all cup products of elements of positive degree are equal to zero, with the exception of the case $\Psi=(e+h+f)\otimes(e+h+f)$, where the only \emph{a priori} nonzero cup product is indeed non-zero. In particular, for the deformation parameter $2e\otimes e+2f\otimes f+h\otimes h$, one observes a very surprising phenomenon: there is no drop in $\dim HH^\bullet$ after deformation, but the cup product is trivial. Overall, comparing our results with those of~\cite{MR3988086}, one sees that while two derived equivalent algebras always have isomorphic Hochschild cohomology \cite{MR1099084}, the cohomological behaviour of their deformations may differ in a very substantial way for special values of the deformation parameters. 

\printbibliography
\end{document}